\newtheorem{theorem}{Theorem}[section]
\newtheorem{lemma}[theorem]{Lemma}
\newtheorem{proposition}[theorem]{Proposition}
\theoremstyle{definition}
\newtheorem{definition}[theorem]{Definition}
\theoremstyle{remark}
\newtheorem{remark}[theorem]{\sc Remark}
\newtheorem{example}[theorem]{\sc Example}
\newtheorem{note}[theorem]{\sc Note}
\renewcommand{\Box}{\square}    
\renewcommand{\i}{{\rm{int}}}
\newcommand{\Sing}{{\mathrm{Sing\hspace{2pt}}}}
\newcommand{\rank}{{\mathrm{rank\hspace{2pt}}}}
\newcommand{\ord}{{\mathrm{ord}}}
\newcommand{\im}{{\mathrm{Im\hspace{2pt}}}}
\newcommand{\jac}{{\mathrm{Jac}}}
\newcommand{\e}{\varepsilon}
\newcommand{\m}{\setminus}
\newcommand{\s}{\subset}
\newcommand{\fin}{\hspace*{\fill}$\Box$\vspace*{2mm}}
\newcommand{\cO}{{\mathcal O}}
\newcommand{\bC}{{\mathbb C}}
\newcommand{\bP}{{\mathbb P}}
\begin{document}

\title[Local image of analytic map]{The local image problem for complex analytic maps}

\author{\sc Cezar Joi\c ta}
\address{Institute of Mathematics of the Romanian Academy, P.O. Box 1-764,
 014700 Bucharest, Romania and Laboratoire Europ\' een Associ\'e  CNRS Franco-Roumain Math-Mode}
\email{Cezar.Joita@imar.ro}

\author{Mihai Tib\u ar}
\address{Universit\' e de  Lille, CNRS, UMR 8524 -- Laboratoire Paul Painlev\'e, F-59000 Lille,
France}  
\email{mtibar@univ-lille.fr}

\subjclass[2010]{32B10, 32C25, 32H02, 32H35}

\keywords{set germs, analytic sets, analytic maps} 

\thanks{The authors acknowledge the support of the Labex CEMPI
(ANR-11-LABX-0007-01). The first author acknowledges the
CNCS grant PN-III-P4-ID-PCE-2016-0330.}

\begin{abstract}
 We address the question ``when  the local image of a map is well defined'' and 
answer it in case of holomorphic map germs  with target $(\bC^{2}, 0)$. We prove a criterion for  holomorphic map germs $(X, x)\to (Y, y)$ to be locally open, solving a conjecture by Huckleberry  in all dimensions. 
\end{abstract}

\maketitle

\section{Introduction}

 Let  $F :(X,x) \to (Y,y)$ be a non-constant complex analytic map germ  between complex analytic set germs of pure dimensions. 
Consider an embedding $(X,x)\subset (\bC^N, 0)$ and the intersections of some representative $X$ with arbitrarily small balls $B_{\e}\s \bC^N$ centred at $0$.
  We say that $F$ has a well defined  \emph{local image} if and only if  the germ at $y$ of the set $F(B_{\e}\cap X)$ is independent of the  small enough $\e>0$.  This extends the notion of \emph{locally open maps}. The map germ $F :(X,x) \to (Y,y)$ is said  to  be \emph{open at $y$} (or that it is locally open, see e.g. \cite{Hu}) if for every open neighbourhood $U$ of $x$, $F(U)$ has $y$ as an interior point. Therefore ``locally open'' trivially implies ``well defined local image''.
  
  For instance in case of a  non-constant holomorphic function germ $f : (\bC^{n}, 0) \to (\bC, 0)$, the Open Mapping Theorem tells that $f(U)$ is open. Therefore the equality of set germs $(\im f, 0) = (\bC,0)$ holds,  thus  $f$ is locally open, and in particular the function germ $f$ has a well-defined image as a set-germ.
 
\medskip

If the target of a holomorphic map $F$ has higher dimension, then $F$ may still have a well defined local image, 
for instance if $F$ is a \emph{proper map at $y$}, by Remmert's Proper Mapping Theorem \cite{Re, GR1}.  Nevertheless,  without the properness, map germs as simple as $G: (\bC^{2},0) \to (\bC^{2},0)$, $(x,y) \mapsto (x, xy)$,  do not have a well defined local image.
In this example, the image by $G$  of any small ball $B_{\e}$ centred at the origin is a subanalytic set (but not analytic) which depends on the radius $\e$ so radically that the set germs $(G(B_{\e}), 0)$ and $(G(B_{\e'}), 0)$ are different  if $\e \not= \e'$.

\medskip

We address the following question:

 \medskip
\noindent
{\sc The image problem.}
\emph{Under what conditions the image of a complex analytic map germ is  a well defined set germ?}

 \medskip

We ask the local image to be a set germ, and not necessarily to be analytic. It is another long standing question under what conditions the local image of a holomorphic map is an analytic variety, see for instance \cite[p. 447]{Hu} for some comments.  If the image is a well defined set germ then it must be a subanalytic set germ (see Note \ref{n:note}). 
 
 Beyond the case of holomorphic functions germs $f : (\bC^{n}, 0) \to (\bC, 0)$ evoked before,  one encounters locally open images, according to Hamm's result \cite{Ha-icis}, also in case of complex map germs $F : (\bC^{n}, 0) \to (\bC^{p}, 0)$ which define an \emph{isolated complete intersection singularity} $(X,0) \subset (\bC^{n}, 0)$, where $X$ is the zero locus $Z(F)$ of $F$. Moreover, it turns out that the image by $F$ of the germ of the singular locus $(\Sing F,0)$ is an analytic set germ, and more precisely a hypersurface germ. It  then follows, cf  \cite{Ha-icis, Lo}, that there exists a fibration over the germ at 0 of the complement $\bC^{p}\m F(\Sing F)$, and on these bases one was able to further study the topology of the Milnor fibration  in relation to algebraic invariants\footnote{The well known monograph \cite{Lo}, recently re-edited, contains some of the most significant results on this rich topic.} of the isolated complete intersection $X$. 
 A far-reaching extension of local fibrations to  map germs $F$ such that their zero locus $Z(F)$  has \emph{nonisolated singularities} is done in \cite{JT}.


\medskip

 While the image problem remains widely open in general (and with almost no hope), we provide here the following  classification of holomorphic map germs $(f,g):(\bC^n,0)\to(\bC^2,0)$:

\begin{theorem}\label{t:main1}
Let $(f,g):(\bC^n,0)\to(\bC^2,0)$ be a non-constant holomorphic map germ. Then:

\begin{itemize}
\rm \item[(i)]  \it Its image is locally open,  i.e. $(\im (f,g), 0) = (\bC^{2}, 0)$,  if and only if:
  \begin{enumerate}
\rm \item \it   $\dim Z(f)\cap Z(g)=n-2$, or 
\rm \item \it  $\dim Z(f)\cap Z(g)=n-1$ and  $Z(f)\not\subset Z(g)$,  $Z(g)\not\subset Z(f)$, and 
$\im(f,g)$ is well defined as a set germ at $0$.
 \end{enumerate}
 
\rm \item[(ii)]  \it  In case $Z(g)\subset Z(f)$ or  $Z(f)\subset Z(g)$,  the map $(f,g)$  has a well defined local image if and only if $(\im (f,g), 0)$ is an irreducible plane curve germ, and this is equivalent to  $\jac(f,g)\equiv 0$.  
\end{itemize}
\end{theorem}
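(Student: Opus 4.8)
The plan is to organise the discussion around the generic rank $r\in\{1,2\}$ of $(f,g)$, which equals the dimension of the local image and satisfies $r=1$ exactly when $\jac(f,g)\equiv0$. After noting that $f(0)=g(0)=0$, and that if one component vanishes identically then the other alone gives $\im(f,g)=\bC\times\{0\}$ (a containment situation with $\jac\equiv0$), I may assume $f,g$ non-constant, so that $Z(f),Z(g)$ are hypersurfaces and $Z(f)\cap Z(g)=(f,g)^{-1}(0)$ has dimension $n-2$ or $n-1$. The first step is the rank-one analysis: if $\jac(f,g)\equiv0$ then $(f,g)$ factors as $\gamma\circ u$ with $u\colon(\bC^n,0)\to(\bC,0)$ non-constant and $\gamma\colon(\bC,0)\to(\bC^2,0)$ a parametrisation of the image curve; the Open Mapping Theorem makes $u(B_\e)$ a fixed disc, so $\im(f,g)=\gamma(\text{disc})$ is a well defined irreducible plane curve germ, and comparing the zero loci of the two components of $\gamma$ forces $Z(f)=Z(g)$ (or one of them to be $\bC^n$). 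Thus $\jac\equiv0$ always entails a containment, and contrapositively the no-containment regimes (a), (b) have rank $2$ and two-dimensional image.

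For regime (a), where $\dim Z(f)\cap Z(g)=n-2$, I would prove openness by a transversal slice: a generic linear plane $P\cong\bC^2$ through $0$ meets the $(n-2)$-dimensional fibre $(f,g)^{-1}(0)$ only at $0$, so the restriction $(f,g)|_P\colon(P,0)\to(\bC^2,0)$ has zero-dimensional fibre, hence is finite, hence open; therefore $(f,g)(B_\e)\supset (f,g)(B_\e\cap P)$ is a neighbourhood of $0$ and $(\im(f,g),0)=(\bC^2,0)$. This already settles the implication (a)$\Rightarrow$ open and, together with the first step, the implication $\jac\equiv0\Rightarrow\im(f,g)$ is a well defined irreducible curve in the containment case.

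Regime (b) is the core. Writing $f=hf_1,\ g=hg_1$ with $h=\gcd(f,g)$ and $f_1,g_1$ coprime non-units, regime (a) applied to the coprime pair shows that $(f_1,g_1)$ is open, and I would use the factorisation $(f,g)=\mu\circ(h,f_1,g_1)$ through the scaling $\mu(t,a,b)=(ta,tb)$. Openness of $(f,g)$ at $0$ is then controlled by how the directions $[f_1:g_1]$ behave as one approaches the common divisor locus $Z(h)$: the model $G$ of \S\ref{ex:notnice} is precisely the degenerate case in which these directions collapse to a single value, producing an $\e$-dependent horn. To exploit the hypothesis I would argue one direction at a time: for $\lambda\in\bC$ the hypersurface $H_\lambda=\{g_1=\lambda f_1\}$ is sent by $(f,g)$ into the line $\{v=\lambda u\}$ through the holomorphic function $hf_1|_{H_\lambda}$, which is not identically zero for generic $\lambda$ because $f_1,g_1$ are coprime, so the Open Mapping Theorem fills a disc in each such line. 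The main obstacle is \emph{uniformity in $\lambda$}: a priori the filled radius shrinks as $\lambda$ ranges over $\bP^1$, which is exactly the mechanism that destroys well-definedness; the role of the hypothesis ``$\im(f,g)$ is a well defined set germ'' is that this $\e$-independence upgrades the direction-wise filling to a uniform one over all of $\bP^1$, forcing $(\im(f,g),0)=(\bC^2,0)$. Making this upgrade rigorous is the step I expect to be the most delicate.

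It remains to close the converse in the containment case and to assemble the equivalence. When $Z(f)\subset Z(g)$ (or conversely) and $\jac\not\equiv0$, the image is two-dimensional but the directions $[f:g]$ degenerate along the full-dimensional fibre $(f,g)^{-1}(0)=Z(f)\cap Z(g)$ of dimension $n-1$, and I would show, following the analysis of $G$ in \S\ref{ex:notnice}, that $(f,g)(B_\e)$ is a horn whose germ varies with $\e$, so the image is not well defined; hence in the containment case well-definedness forces $\jac\equiv0$. Finally, if $\im(f,g)$ is locally open then it is a well defined two-dimensional germ, so by the first step there is no containment and, by the containment analysis just given, the containment cases are excluded; thus $\dim Z(f)\cap Z(g)$ is either $n-2$, giving (a), or $n-1$ with no containment and (automatic) well-definedness, giving (b). Conversely (a) yields openness by the slice argument and (b) by the direction argument, which completes the equivalence.
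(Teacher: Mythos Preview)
Your slice argument for regime~(a) matches the paper exactly. The factorisation $(f,g)=\gamma\circ u$ in the rank-one case is fine once you note that the image is a curve and lift through its normalisation; the paper obtains the same conclusion via Remmert's Rank Theorem after showing all fibres have dimension $n-1$.

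The substantive gap is in regime~(b) and in the containment case with $\jac\not\equiv0$. You correctly identify the obstacle in~(b) as the \emph{uniformity} of the direction-wise filling over $\bP^1$, and then defer it (``the step I expect to be the most delicate''). Likewise, in the containment case you appeal to the behaviour of the single example $G(x,y)=(x,xy)$ to argue that the image is an $\e$-dependent horn; this is suggestive but is not a proof for a general $(f,g)$ with $Z(g)\subset Z(f)$ and $\jac\not\equiv0$. Both gaps are exactly where the paper inserts a structural result that you are missing.

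The paper's key device is a dichotomy: \emph{if $\im(f,g)$ is a well defined set germ at $0$, then either $(\im(f,g),0)=(\bC^2,0)$ or $(\im(f,g),0)$ is an irreducible plane curve germ}. This is proved by first showing (by induction on $\dim X$ and the rank theorem) that any holomorphic image in $\bC^2$ decomposes as an open set disjoint union a set of $3$-Hausdorff measure zero; then using well-definedness to compare $(f,g)(B_{\e'})$ with $(f,g)(\overline{B_{\e'}})$ and conclude that the image is \emph{closed} in a small ball $D_r$; connectedness of the complement of a $3$-measure-zero set then forces the image to be all of $D_r$ as soon as it has interior near $0$. If it has no interior near $0$, all nearby fibres have dimension $n-1$ and Remmert's Rank Theorem gives the curve conclusion. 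With this dichotomy in hand, both of your gaps close in one line each: in~(b), a Puiseux parametrisation of a putative image curve would force $Z(f)\subset Z(g)$ or $Z(g)\subset Z(f)$, contradicting the hypothesis, so the image must be $(\bC^2,0)$; in the containment case, the missing axis rules out $(\bC^2,0)$, so a well defined image must be a curve, whence $\jac\equiv0$. Your direction-by-direction and horn heuristics are thus replaced by a single closedness/measure argument, and that is the idea your outline lacks.
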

\medskip

 All situations described in Theorem \ref{t:main1} can be realised, and we discuss some examples in \S \ref{ex:notnice} and \S \ref{ex:nice1}.
The case (b) of Theorem \ref{t:main1} leaves us with the problem of how to decide whether the image of $(f,g)$ is well defined as a set germ (and thus it is locally open), which we will solve as follows. We first produce a handy test for the local openness of the image  in terms of \emph{gap lines},  Proposition \ref{p:crit}. Although this is a sufficient condition only, it leads to the notion of \emph{gap curves} which is central in our more general Proposition \ref{p:gapcurve} providing an equivalence criterion for a holomorphic map germ $F:(X,a)\to (Y,b)$ to be locally open, thus completing the solution to our problem.
 Moreover, this criterion allows us to find the answer to a much older problem, as follows.

\smallskip

About  50 years ago, Huckleberry addressed in \cite{Hu} the question of locally open maps, which is a particular case of the Image Problem stated above (see the discussion at the beginning of this Introduction). Huckleberry introduced the notion of a \emph{subflat} map (an algebraic condition, see Definition \ref{d:subflat}) and  proved that  a holomorphic map germ $(\bC^{2},0) \to (\bC^{2},0)$ is open if and only if it is subflat.
He conjectured  that this characterisation holds for any  holomorphic map germ $(X,x) \to (Y,y)$, in arbitrary dimensions. 

We derive from Proposition \ref{p:gapcurve} the following positive answer to Huckleberry's conjecture \cite{Hu}:

\begin{theorem}\label{t:huck} 
Let $F:(X,a)\to (Y,b)$,  $\dim X\geq \dim Y \ge 1$, be a  holomorphic map germ
between two germs of reduced locally irreducible complex spaces, and such that $\Sing F \not= X$. 

Then  $(\im F,b)=(Y,b)$ if and only if $F$ is subflat. 
\end{theorem}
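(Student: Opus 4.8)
The plan is to deduce Theorem \ref{t:huck} from the gap-curve criterion of Proposition \ref{p:gapcurve}, which already supplies the geometric half of the statement: $F$ is locally open, i.e. $(\im F, b) = (Y,b)$, if and only if $F$ admits no gap curve through $b$. Granting this, the entire content of the theorem collapses to a single equivalence, namely that \emph{the nonexistence of a gap curve coincides with subflatness} in the sense of Definition \ref{d:subflat}. I would therefore structure the argument as: first invoke Proposition \ref{p:gapcurve}; then prove ``$F$ is subflat $\Leftrightarrow$ $F$ has no gap curve''; the theorem is then immediate. The standing hypotheses that $(X,a)$ and $(Y,b)$ are reduced and locally irreducible and that $\Sing F \neq X$ are needed to guarantee that $F$ is dominant (so $\dim \im F = \dim Y$ and the fibre-dimension theorem applies) and that the curve selection lemma can be run inside the irreducible germs.

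For the implication ``not subflat $\Rightarrow$ gap curve exists'' I would start from the failure of the subflatness condition of Definition \ref{d:subflat}, which detects an irreducible curve germ $W \subset (Y,b)$ whose preimage carries a component collapsing onto $W$ with fibres of dimension strictly above the generic value $\dim X - \dim Y$. Working inside the bad locus $\Sigma = \{x : \dim_x F^{-1}(F(x)) > \dim X - \dim Y\}$, a proper analytic subset of $X$, a dimension count shows that $F(\Sigma)$ cannot fill a neighbourhood of a generic point of $W$, so the image leaves a gap along $W$; the curve selection lemma then extracts a genuine gap curve inside $W$. For the converse ``gap curve $\Rightarrow$ not subflat'', a gap curve $\gamma$ means that $\im F$ misses $\gamma \setminus \{b\}$, so over (the complexification of) $\gamma$ the map degenerates and the corresponding component of $F^{-1}(\gamma)$ acquires excessive fibre dimension, contradicting subflatness.

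To make the dimension bookkeeping uniform across an arbitrary target, I would reduce to the case $\dim Y = 2$ already settled by Theorem \ref{t:main1}. Since a gap curve is one-dimensional, a generic projection $\pi : (Y,b) \to (\bC^2,0)$ sends $\gamma$ to a curve and the composite $(f,g) := \pi \circ F : (X,a) \to (\bC^2,0)$ to a map whose openness is governed by the dichotomy (a)/(b) of Theorem \ref{t:main1}, with $Z(f)\cap Z(g) = F^{-1}(\pi^{-1}(0))$ read inside $X$. The point is that subflatness of $F$ is equivalent to subflatness of $(f,g)$ for generic $\pi$, and Theorem \ref{t:main1} makes the latter transparent: alternative (a) is the expected central-fibre dimension, alternative (b) is the genuinely degenerate yet still open situation, while the excluded case $Z(f)\subset Z(g)$ or $Z(g)\subset Z(f)$ is exactly where a gap line, hence a gap curve, is forced. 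Assembling the two implications yields the equivalence, and with Proposition \ref{p:gapcurve} the theorem follows.

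I expect the main obstacle to be precisely this translation step in full generality: proving that a \emph{generic} two-dimensional projection of $Y$ simultaneously detects any existing gap curve and faithfully reflects the subflatness of $F$. Ensuring that genericity does not destroy a collapsing component, controlling the contribution of $\Sing F$ and of the several branches of $F^{-1}(W)$, and matching the algebraic condition of Definition \ref{d:subflat} with the geometric gap-curve condition of Proposition \ref{p:gapcurve} are the delicate points; the curve selection lemma and upper semicontinuity of fibre dimension are the main tools, but their interplay with local irreducibility of $(X,a)$ must be handled with care.
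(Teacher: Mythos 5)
Your overall skeleton is exactly the paper's: invoke Proposition \ref{p:gapcurve} to replace ``$(\im F,b)=(Y,b)$'' by ``$F$ has no gap curve'', so that the theorem reduces to the single equivalence ``$F$ subflat $\Leftrightarrow$ $F$ has no gap curve''. The paper, however, settles that bridge in one line: Huckleberry himself proved it, in the same general setting and essentially by the Nullstellensatz, as \cite[Prop.~3.2]{Hu}; citing it finishes the proof. Your proposal instead undertakes to prove the bridge, and this is where there is a genuine gap: neither of your two suggested mechanisms actually connects Definition \ref{d:subflat} to gap curves.

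Concretely, two things go wrong. First, you misread subflatness as a fibre-dimension condition. Definition \ref{d:subflat} is an ideal-contraction statement, and its failure at a prime $I$ with $\dim V(I)>0$ yields (using injectivity of $F^*$, which follows from $\Sing F\neq X$ and irreducibility) a function $g\notin I$ with $F^*(g)\in\langle F^*(I)\rangle$; by the R\"uckert Nullstellensatz the geometric content is $\im F\cap V(I)\subset V(I)\cap V(g)$, i.e.\ the image is \emph{thin} inside $V(I)$ --- it says nothing about your locus $\Sigma$ of excessive fibre dimension. In the other direction, a gap curve $\gamma$ has \emph{empty} fibres over $\gamma\setminus\{b\}$, not fibres of ``excessive dimension'', so that implication is also unsupported. (The correct dictionary is short: from $\im F\cap V(I)\subset V(I)\cap V(g)$ choose a curve germ $C\subset V(I)$ through $b$ with $C\cap V(g)=\{b\}$; then $F^{-1}(C)\subset F^{-1}(b)$, a gap curve. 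Conversely, if $C=V(I)$ is a gap curve, pick $h\in\mathfrak{m}_b\setminus I$; since $F^*(h)$ vanishes on $F^{-1}(C)=V(\langle F^*(I)\rangle)$, the Nullstellensatz gives $F^*(h^k)\in\langle F^*(I)\rangle$ while $h^k\notin I$, so subflatness fails.) Second, the proposed reduction to $\dim Y=2$ via Theorem \ref{t:main1} cannot work as stated: Theorem \ref{t:main1} requires the source to be the smooth germ $(\bC^n,0)$, whereas $(X,a)$ is a singular space; for $\dim Y>2$ the $\pi$-preimage of a curve in $\bC^2$ is a \emph{hypersurface} of $Y$, so gap curves of $\pi\circ F$ and of $F$ do not correspond; and case (b) of Theorem \ref{t:main1} is not an effective criterion anyway, since it presupposes that the image is a well-defined set germ. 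The ``translation step'' you flag as the expected obstacle is not a technicality to be handled with care --- it is precisely the missing equivalence, so the proposal as written does not prove the theorem. The repair is either the paper's citation of \cite[Prop.~3.2]{Hu} or the direct Nullstellensatz argument sketched above.
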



\section{The image of the map germ $(f,g)$}\label{p:dim}

\subsection{Proof of Theorem \ref{t:main1}(i)(a)}\

Let $(H,0)\subset (\bC^{n},0)$ be a general complex $2$-plane  germ such that $0$ is  an isolated point of $H\cap (f,g)^{-1}(0)$. Then, by e.g. \cite[Proposition, page 63]{GR}, it follows that there exist
 an open ball  $B$ at $0$ in $\bC^{n}$  and   an open neighbourhood $U$ of the origin in $\bC^2$ 
such that $(f,g)(H\cap B)\subset U$ and the induced map $(f,g)_{|H\cap B}:H\cap B\to U$ is finite. 
By the Open Mapping Theorem  (cf \cite[ page 107]{GR}), this implies  that $(f,g)(H\cap B)$ is open, thus $(\im (f,g),0) = (\bC^{2}, 0)$.
\fin

\subsection{Proof of Theorem \ref{t:main1}(ii)}\label{ss:proofi}


\noindent
  If the image of $(f,g)$ is a curve (necessarily irreducible) then it is a germ by definition. In fact, if  $\im (f,g)$
is only included in a complex curve, and $\im (f,g)\not=\{(0,0)\}$, then  $\im (f,g)$ must be the whole irreducible curve germ, as an application of the Open Mapping Theorem.

\smallskip
\noindent
If $Z(g)\subset Z(f)$ then  $\im (f,g)$ cannot be $(\bC^{2},0)$ since the axis $\bC \times \{0\}$ is missing from the image.
 By the next result, if $\im (f,g)$ is a well defined set germ then it must be an irreducible curve germ.
\begin{proposition}\label{p:germ}
Let $(f,g):(\bC^n,0)\to(\bC^2,0)$ be non-constant holomorphic map germ. 

If  $\im (f,g)$ is a well defined set germ at the origin,  then either $(\im(f,g), 0)=(\bC^2,0)$ or $(\im(f,g),0)=(C, 0)$ where  $(C,0)\subset (\bC^2,0)$ is  an irreducible  complex curve germ.
\end{proposition}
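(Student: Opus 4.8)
The plan is to read off the dichotomy from the generic rank of the map. Write $\phi=(f,g)$ and let $q$ denote the maximal rank of its differential over a small representative; since $\phi$ is non-constant one has $q\geq 1$, and since the target is $\bC^2$ one has $q\leq 2$. By Remmert's rank theorem the image of $\phi$ is a subanalytic set which is a locally finite union of complex submanifolds of dimension at most $q$, with a dense part of dimension exactly $q$; in particular $\im\phi$ has real dimension $2q$. The two cases $q=1$ and $q=2$ are distinguished by whether $\jac(f,g)=\d f\wedge \d g$ vanishes identically or not, and I would treat them separately.

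In the case $q=1$ (equivalently $\jac(f,g)\equiv 0$), the functions $f$ and $g$ are analytically dependent on the irreducible germ $(\bC^n,0)$, so there is a nonzero $P\in\cO_{\bC^2,0}$ with $P(f,g)\equiv 0$; hence $\im\phi$ is contained in the curve germ $(Z(P),0)$. Taking $P$ minimal, $(C,0):=(Z(P),0)$ is the Zariski closure of $\im\phi$, and it is irreducible because it is the Zariski closure of the image of the irreducible germ $(\bC^n,0)$. To see that $\im\phi$ fills $(C,0)$, I would restrict $\phi$ to a generic line $\ell\ni 0$, obtaining a non-constant germ $\phi|_\ell\colon(\bC,0)\to(C,0)$; lifting it through the normalisation $(\bC,0)\to(C,0)$ and applying the Open Mapping Theorem yields $\phi(\ell)=(C,0)$, so $(\im\phi,0)=(C,0)$ is an irreducible curve germ.

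In the case $q=2$ the image has real dimension $4$, and the goal is to show that the well-definedness hypothesis forces $(\im\phi,0)=(\bC^2,0)$. Let $\Sigma=\{\jac(f,g)=0\}$, an analytic germ of dimension $\leq n-1$ on which $\d\phi$ has rank $\leq 1$; the discriminant $D:=\phi(\Sigma)$ is therefore contained in a curve germ, by the same dependence argument as above applied to $\phi|_\Sigma$. Off $\Sigma$ the map $\phi$ is a submersion, hence open, so for a representative $A_\e:=\phi(B_\e)$ one checks that $\partial A_\e\subset D\cup\phi(\partial B_\e)$. If one knows that the frontier germ $(\partial A_\e,0)$ is contained in the analytic curve $(D,0)$, the conclusion follows quickly: $U_0\setminus D$ is connected, $A_\e\cap(U_0\setminus D)$ is open, closed and non-empty in it, hence equals $U_0\setminus D$, while $(D,0)\subset(\im\phi,0)$ by the rank-$1$ argument applied to $\phi|_\Sigma$; together these give $(\im\phi,0)=(\bC^2,0)$.

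The main obstacle is exactly that last reduction: excluding the $\e$-dependent contribution $\phi(\partial B_\e)$ of the bounding sphere from the frontier of the image at $0$. This is the pinching phenomenon of the example $(x,y)\mapsto(x,xy)$, where the sphere image produces a cuspidal frontier through $0$ that varies with $\e$. The plan is to exploit subanalyticity of the family $\e\mapsto\phi(B_\e)$ together with the curve selection lemma: were $0$ a frontier point contributed by the sphere for all small $\e$, the corresponding frontier arcs would move with $\e$, contradicting the constancy of the germ $(\im\phi,0)$. Hence the frontier at $0$ reduces to the $\e$-independent analytic discriminant $D$, which is precisely what the argument above requires.
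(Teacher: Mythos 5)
Your rank--$1$ case is essentially sound (and in fact needs no hypothesis: when $\jac(f,g)\equiv 0$ the image is automatically an irreducible curve germ, which is also how the paper treats that situation, via constancy of fibre dimension and Remmert's rank theorem, Proposition~\ref{p:nara}). The genuine gap is exactly where you located it yourself: in the rank--$2$ case you must exclude the contribution of $\phi(\partial B_\e)$ to the frontier of the image at $0$, and the mechanism you propose does not do this. The containment $\partial A_\e\subset D\cup\phi(\partial B_\e)$ holds only on a neighbourhood of $0$ whose size depends on $\e$, so intersecting over $\e$ yields nothing; and there is no argument behind ``the frontier arcs would move with $\e$'': constancy of the germ $(\im\phi,0)$ constrains the \emph{image}, not the sphere images, and nothing forbids a fixed subanalytic arc through $0$ from lying in the frontier contributed by $\phi(\partial B_{\e})$ for every small $\e$. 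In the model example $(x,y)\mapsto(x,xy)$ every sphere image does pass through $0$; there the resulting frontier germs happen to vary with $\e$, but you would need a proof that they \emph{must} vary whenever they are not contained in $D$, and the curve selection lemma alone does not give that. So the plan, as stated, is not a proof.

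The idea you are missing is how to convert well-definedness of the germ into a usable topological property, and the paper does it by a short sandwich argument: choosing $\e'<\e$ and $r>0$ with $\phi(B_{\e'})\cap D_r=\phi(B_\e)\cap D_r$, one has
\[
\phi(B_{\e'})\cap D_r\ \subset\ \phi(\overline{B_{\e'}})\cap D_r\ \subset\ \phi(B_\e)\cap D_r\ =\ \phi(B_{\e'})\cap D_r ,
\]
hence the image near $0$ equals $K\cap D_r$ with $K=\phi(\overline{B_{\e'}})$ compact; in particular the image is \emph{closed} in $D_r$. This single observation makes the sphere contribution disappear (the frontier of the image near $0$ lies in the image itself), after which a connectedness argument of the type you sketched goes through. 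Note also that the paper runs that argument with its Lemma~\ref{l:union} (image $=$ open set $\sqcup$ set of zero $3$-Hausdorff measure) in place of your discriminant curve $D=\phi(\Sigma)$; this sidesteps your secondary, glossed-over claim that $\phi(\Sigma)$ is contained in a curve germ, which is true but itself requires the fibre-dimension/Remmert argument on each component of $\Sigma$, not merely ``the same dependence argument''.
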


To prove this proposition we need the following:
\begin{lemma}\label{l:union}
Suppose that $X$ is a complex space and $F:X\to \bC^2$ is a holomorphic map. Then $\im F$ can be written as a disjoint union $U\sqcup A$, where $U$ is a (possibly empty) open subset of $\bC^2$ and $A$ has 3-Hausdorff measure equal to 0.
\end{lemma}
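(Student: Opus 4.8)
The plan is to decompose the image according to the rank of the differential of $F$ along a stratification of $X$ into complex manifolds. We may assume $X$ reduced, since passing to the reduction changes neither the underlying set nor $\im F$. Choose a stratification $X=\bigsqcup_{\alpha} X_\alpha$ into at most countably many connected complex manifolds, for instance the canonical one given by the iterated singular loci $X\supset \Sing X\supset \Sing \Sing X\supset\cdots$. Since $\cH^{3}$ is countably subadditive, it suffices to treat each stratum $M:=X_\alpha$ separately and to verify that $F(M)$ is the union of an open set and a set of vanishing $\cH^{3}$.

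On a complex manifold $M$ the locus $R:=\{x\in M:\rank_{\bC} dF_x=2\}$ is open, by lower semicontinuity of the rank, and there $F$ is a holomorphic submersion; hence $F(R)$ is open in $\bC^{2}$. It remains to bound the image of the critical locus $\Sigma:=M\setminus R=\{x:\rank_{\bC} dF_x\le 1\}$, which is a closed analytic subset of $M$.

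First I would stratify $\Sigma$ again into connected complex manifolds $\Sigma=\bigsqcup_\beta S_\beta$. The key observation is that for $x\in S_\beta$ one has $T_xS_\beta\subseteq T_xM$, so $\rank_{\bC} d(F|_{S_\beta})_x\le \rank_{\bC} dF_x\le 1$; thus $F|_{S_\beta}$ has complex rank at most $1$ everywhere. Splitting $S_\beta$ into the open set where this rank equals $1$ and the closed set where it equals $0$, the constant rank theorem applies on each piece: on the rank-$1$ part $F$ is locally equivalent to a projection onto one complex coordinate, so its image is locally contained in a $1$-dimensional complex submanifold of $\bC^{2}$, a real surface of vanishing $\cH^{3}$; on the rank-$0$ part $F$ is locally constant, with discrete image. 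Covering by countably many charts and using countable subadditivity of $\cH^{3}$, we obtain $\cH^{3}(F(S_\beta))=0$, and summing over the countably many $\beta$ gives $\cH^{3}(F(\Sigma))=0$.

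Finally I would set $U:=\bigcup_\alpha F(R_\alpha)$, an open subset of $\bC^{2}$, and $W:=\bigcup_\alpha F(\Sigma_\alpha)$, which satisfies $\cH^{3}(W)=0$; then $\im F=U\cup W$, and putting $A:=W\setminus U$ yields the required disjoint decomposition $\im F=U\sqcup A$ with $\cH^{3}(A)=0$. The main obstacle is the treatment of the rank-$1$ behaviour: the rank is only lower semicontinuous, so it need not be constant near a generic point of $\Sigma$, and one cannot apply the constant rank theorem to $F$ directly on $\Sigma$. The device of re-stratifying $\Sigma$ into manifolds and restricting $F$ to each stratum—where the restricted differential automatically has rank $\le 1$—is exactly what restores constant rank and makes the dimension count go through; everything else is bookkeeping with Hausdorff measure and countable unions.
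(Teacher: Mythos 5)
Your overall architecture is sound and is, in essence, the paper's own argument with the induction unrolled: the paper proves the lemma by induction on $\dim X$, reducing via $\Sing X$ and the loci where the rank is $2$, $1$ or $0$, whereas you make the recursion explicit as a stratification by iterated singular loci together with restriction of $F$ to the strata. Your rank-$2$ and rank-$1$ steps are correct and coincide with the paper's (open image from submersion points; images locally contained in complex curves, hence $\cH^{3}$-null, from the constant rank theorem on the \emph{open} rank-$1$ locus; countable subadditivity and the union trick to assemble everything).

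The gap is in the rank-$0$ step. The set $Z_\beta:=\{x\in S_\beta : d(F|_{S_\beta})_x=0\}$ is a \emph{closed analytic subset} of $S_\beta$, not an open subset and in general not a manifold, so the constant rank theorem does not apply to it (except in the degenerate case $Z_\beta=S_\beta$, where $F$ is indeed constant on the connected manifold $S_\beta$). Your sentence ``the constant rank theorem applies on each piece'' is therefore only justified for the open rank-$1$ piece. Moreover ``discrete image'' is false in general: $Z_\beta$ may have infinitely many connected components, and the constant values of $F$ on them may accumulate (take $F(z)=(g(z),0)$ on $\bC$ with $g$ entire whose critical values enumerate a dense countable set); what is true, and what you actually need, is that $F(Z_\beta)$ is \emph{countable}, hence $\cH^{3}$-null. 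The local constancy of $F$ on $Z_\beta$ does hold, but it requires an argument you did not give: for instance, each irreducible component $W$ of $Z_\beta$ has a connected regular part $W^{\reg}$, dense in $W$, whose tangent spaces lie in those of $S_\beta$, so $d(F|_{W^{\reg}})=0$ and $F$ is constant on $W^{\reg}$, hence on $W$ by continuity, hence on each connected component of $Z_\beta$. Alternatively---and more in the spirit of your own method---apply your stratification device once more: write $Z_\beta$ as a countable union of connected complex manifolds $T_\gamma$; on each one $d(F|_{T_\gamma})\equiv 0$, so $F$ is constant there, and $F(Z_\beta)$ is countable. Either repair closes the gap. Note that the paper avoids this issue entirely: in its inductive scheme the rank-$0$ locus, being a proper analytic subset of lower dimension, is simply handed back to the induction hypothesis rather than analyzed directly.
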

\begin{proof}
We stratify $X$ by the rank of $F$ and write $X=M_2\sqcup M_1\sqcup M_0$, where the $M_j$'s are complex manifolds, such that $F_{|M_j}$ has rank $j$. Note that $M_j$ is not necessarily connected and its  connected components may not have the same dimension. Then $F(M_2)$ is open and $F(M_1\cup M_0)$ is a countable union
of complex curves or points and hence has 3-Hausdorff measure equal to 0. The subsets $U:=F(M_2)$ and $A:=F(X)\setminus U \subset F(M_1\cup M_0)$ verify our claim.
\end{proof}


\begin{proof}[Proof of Proposition \ref{p:germ}]
Let $B_\e\subset \bC^n$ be an open ball centred at the origin of radius $\e$ such 
that $(f,g)$ is holomorphic on $B_\e$. By our hypothesis,   
for all $0<\e'<\e$ we have the equality of germs $((f,g)(B_\e),0),0)=((f,g)(B_{\e'}),0)$. 

We fix $0<\e'<\e$ and $r>0$ such that $(f,g)(B_{\e'})\cap D_r=(f,g)(B_{\e})\cap D_r$ where $D_r\subset \bC^2$ denotes the open ball centred at 
$0$ and of radius $r$. Since 
$$(f,g)(B_{\e'})\cap D_r\subset (f,g)(\overline {B_{\e'}})\cap D_r\subset (f,g)(B_{\e})\cap D_r$$
we get the equality $(f,g)(\overline {B_{\e'}})\cap D_r= (f,g)(B_{\e})\cap D_r$.

By the above equality, since $K:=(f,g)(\overline {B_{\e'}})$  is a compact subset of $\bC^2$, the image  $(f,g)(B_{\e})\cap D_r=K\cap D_r$ is closed in $D_r$.

By Lemma \ref{l:union}  we have that $K\cap D_r$ is equal to a disjoint union $U\sqcup A$ of  an open subset $U\subset D_r$ and a subset
$A\subset D_r$  which has $3$-Hausdorff measure equal to zero.

 This implies:
\begin{equation}\label{eq:disjoint}
 D_r\setminus A=U\sqcup (D_r\setminus K)
\end{equation}
 which implies in turn that $A$ is closed in $D_r$. 
Since $A$ has $3$-Hausdorff measure equal to zero, it follows that $D_r\setminus A$ 
is connected, see e.g. \cite[Prop. 6, pag 347]{Ch}. 

We distinguish two cases,  where the notation $\i(A)$ is for the interior of the set $A$.

\smallskip
\noindent
\textbf{Case 1:} $0\in\overline{\i((f,g)(B_\e))}$.

Our assumption 
 implies that $0\in \partial \bar U$,  hence $U\neq\emptyset$. 
Then the disjoint union decomposition \eqref{eq:disjoint} of the connected set $D_r\setminus A$ into open sets shows that $D_r\setminus K=\emptyset$. Therefore we have the equality $K \cap D_r  = D_{r}$, which  shows that  the image $(f,g)(\overline {B_{\e'}})$ contains the ball $D_{r}$.  We conclude that $(\im(f,g),0)=(\bC^2,0)$ in this case.  

\smallskip
\noindent
\textbf{Case 2:} $0\not\in\overline{\i((f,g)(B_\e))}$. 

By shrinking $B_\e$ we may assume  that $\i(f,g)(B_\e)=\emptyset$.
This implies that all non-empty fibres $(f,g)^{-1}((f,g)(p))$ arbitrarily close to  $(f,g)^{-1}(0)$ have pure dimension $n-1$.
Indeed, if there is a sequence of points $x_{i}\to 0$ in the source, such that the local dimension of the fibre
 $(f,g)^{-1}((f,g)(x_{i}))$ is $n-2$ then we use the reasoning in the proof of Theorem \ref{t:main1}(a) which shows  
 $(f,g)(x_{i})$ is an interior point of the image for any $i$, and thus $0\in\overline{\i((f,g)(B_\e))}$, which contradicts the hypothesis.

In order to finish the proof we need the following result, which is also called  ``Remmert's Rank Theorem''  by \L ojasiewicz in  \cite[Theorem 1, pag 295]{Loj2}, see also \cite[Prop. 3, Ch. VII]{Na}: 
\begin{proposition}\label{p:nara}\cite[Satz 18, pag. 30]{Re0}
Let $X$ and $Y$ be complex spaces such that $X$ is  pure dimensional and $f:X\to Y$ be a 
holomorphic map. If $r = \dim_{x} f^{-1}(f(x))$ is independent of $x\in X$ then any point $a\in X$
has a fundamental system of neighbourhoods $\{U_{i}\}$ such that $f(U_{i})$ is analytic at $f(a)$, of dimension $\dim X - r$.
\fin
\end{proposition}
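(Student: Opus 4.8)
The plan is to reduce the fibre dimension to zero by a generic linear slicing and then invoke Remmert's theorem on the image of a finite map. Set $n=\dim X$ and $k:=n-r$, the conjectured dimension of the image. First I would make the routine reductions: since fibres and images are purely set-theoretic, I may pass to the reduction of $X$; and since $\im(f|_{X_1\cup X_2})=\im(f|_{X_1})\cup\im(f|_{X_2})$ while a finite union of analytic germs of dimension $k$ is again analytic of dimension $k$, I may assume that $(X,a)$ is irreducible. In particular $\overline{f(X)}$ is then irreducible, and the fibre dimension formula gives $\dim f(X)=n-r=k$.

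Next I would slice. Fix a local embedding $(X,a)\subset(\bC^N,0)$ and let $Z:=f^{-1}(f(a))$, an analytic germ of dimension $r$ through $a$. Choose a generic affine subspace $M\ni a$ of codimension $r$ in $\bC^N$ so that $a$ is isolated in $Z\cap M$; then the fibre of $f|_{M\cap X}$ over $f(a)$ is $\{a\}$. By upper semicontinuity of the fibre dimension, the locus in $M\cap X$ where $f|_{M\cap X}$ has positive-dimensional fibres is a proper analytic subset avoiding $a$, so after shrinking $M\cap X$ around $a$ the restriction $f|_{M\cap X}$ is finite. Remmert's Proper Mapping Theorem then shows that $V:=f(M\cap X)$ is an analytic germ at $f(a)$, and since finite maps preserve dimension $\dim V=\dim(M\cap X)=n-r=k$.

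The main step, and the point I expect to be the real obstacle, is to upgrade this to $V=f(X)$ as germs at $f(a)$; the inclusion $V\subseteq f(X)$ is free. Equivalently, I must show that every fibre $f^{-1}(y)$ with $y$ near $f(a)$ meets the slice $M$ --- this is exactly where the \emph{constancy} of the fibre dimension over all nearby points (not merely on $Z$) enters. To make this precise I would let $\pi:\bC^N\to\bC^N/M_0\cong\bC^r$ be the linear projection along the direction $M_0$ of $M$ and consider $\Phi:=(\pi,f)$. Since $\Phi^{-1}(\pi(a),f(a))=M\cap Z=\{a\}$, the map $\Phi$ is finite at $a$, so its image $W$ is an analytic germ, pure of dimension $n$. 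Projecting $W$ to the $\bC^r$-factor has $k$-dimensional fibres, while projecting to $Y$ gives $f(X)$, of dimension $k$; since $\overline{f(X)}$ is irreducible of dimension $k$, a dimension count forces each $\bC^r$-fibre of $W$ to map \emph{onto} $\overline{f(X)}$, so that $W=\bC^r\times\overline{f(X)}$ near $\Phi(a)$. Reading this back, $V=f(M\cap X)$ is the $Y$-projection of $W\cap(\{\pi(a)\}\times Y)$, hence $V=\overline{f(X)}=f(X)$, which says precisely that every nearby fibre meets $M$.

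Granting this identification, $f(U\cap X)=V$ is analytic of dimension $k$ for every sufficiently small ball $U$ around $a$, and the family of such $U\cap X$ furnishes the required fundamental system of neighbourhoods. The delicate ingredient throughout is the genericity of the slice $M$: it must be transverse not only to the central fibre $Z$ but, after shrinking, to all nearby fibres at once, and it is precisely the hypothesis of constant fibre dimension that permits such a uniform choice and underlies the product structure of $W$.
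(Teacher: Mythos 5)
Your overall strategy --- slice by a generic codimension-$r$ plane $M$ through $a$, pass to $\Phi=(\pi,f)$, use finiteness of $\Phi$ at $a$ to get an analytic image $W$ of pure dimension $n$, and then argue that $W$ is a cylinder over the image of the central slice --- is exactly the classical route to this result (which the paper itself does not prove, but cites from Narasimhan's book). However, the step you yourself flag as the crux contains a genuine circularity. You justify the product structure $W=\bC^r\times\overline{f(X)}$ by a dimension count resting on the assertion that $\overline{f(X)}$ is \emph{irreducible of dimension $k$}. At this stage of the proof $f(X)$ is not known to be analytic, nor even to be a well-defined set germ; its closure carries no irreducible structure and no honest dimension, and granting it these properties is precisely granting the conclusion of the proposition. (The paper under review is built around examples such as $(x,y)\mapsto(x,xy)$, where $X$ is irreducible yet the image of a ball --- and likewise its closure --- changes with the radius; so irreducibility of $X$ alone cannot deliver what you claim. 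Only the constant-fibre-dimension hypothesis can, and it must be \emph{used}, not invoked in the form of its own consequence.) The dimension count itself also needs each slice image $q(p^{-1}(t))$ to be a closed analytic subset of this putative irreducible set, which again presupposes what is to be proved.

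The non-circular way to get the cylinder structure --- and the place where constancy of the fibre dimension at \emph{nearby} points genuinely enters --- is through the fibres of the other projection. Write $\Delta\subset\bC^r$ for a small polydisc around $t_0:=\pi(a)$, and let $p:W\to\Delta$, $q:W\to Y$ be the two projections, so that $q(W)=f(U)$. For $y\in f(U)$ one has $q^{-1}(y)=\pi\bigl(f^{-1}(y)\cap U\bigr)\times\{y\}$. The fibre $f^{-1}(y)\cap U$ is \emph{pure} $r$-dimensional by hypothesis, and $\pi$ restricted to it has discrete fibres (they are fibres of the finite map $\Phi$), hence $\pi$ maps it onto an \emph{open} subset of $\bC^r$, since a holomorphic map with discrete fibres from a pure $r$-dimensional space to $\bC^r$ is open. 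Thus the analytic set $W$ contains a nonempty open piece of every slice $\Delta\times\{y\}$ that it meets, and the identity theorem then forces $W\supset\Delta\times\{y\}$ for every such $y$; consequently $W=\Delta\times f(U)$, and $f(U)\cong W\cap(\{t_0\}\times Y)$ is analytic of dimension $\dim W-r=n-r$, for a fundamental system of admissible $U$. Note that your mechanism never exploits the pure $r$-dimensionality of the nearby fibres except through the circular claim about $\overline{f(X)}$, which is a further sign that the proposed dimension count cannot be completed as stated. With the replacement above, your outline becomes the standard proof; without it, the main step is unproven.
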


Since all non-empty fibres of  $(f,g)$ have dimension $n-1$, it  follows from Proposition \ref{p:nara} that
there exists a connected neighbourhood $W\subset \bC^n$ of $0$ such that $(f,g)(W)$ is a complex analytic subset of $\bC^2$, thus a complex curve $C$ containing the origin, by Proposition \ref{p:germ} and since in our case the image 
cannot be $(\bC^2, 0)$.
Since $W$ is connected, we get that $C$ is also irreducible, thus we get the equality $(\im(f,g),0)=(C,0)$.

This ends the proof of Proposition \ref{p:germ}, and thus of the first equivalence of Theorem \ref{t:main1}(ii). 
\end{proof}

\medskip
 Let us finally prove the second equivalence  claimed by  Theorem \ref{t:main1}(ii), namely:\\
  $(\im (f,g),0)$ is a well-defined curve germ $\Leftrightarrow$ $\jac(f,g) \equiv 0$. 

If $\im (f,g)$ is a curve then $\jac(f,g) \equiv 0$, trivially, so it remains to prove the converse.
The hypothesis $\jac(f,g) \equiv 0$ implies $\rank_{x}(f,g)\leq 1$, and since $\rank(f,g)\equiv 0$ implies $(f,g)\equiv 0$,  
we only have to deal with the case   $\rank(f,g)\not\equiv 0$. 

If $B$ denotes a small enough open ball centred at $0\in \bC^{n}$, then
$B_{0}:=\{x\in B  \mid \rank_x(f,g)>0\}$  is an open, connected and dense analytic subset of $B$, and actually
 $\rank_x(f,g)=1$, $\forall x\in B_{0}$. By the rank theorem  we deduce that
$\dim_x(f,g)^{-1}(f(x),g(x))=n -1$ for all $x\in B_{0}$.  The next result on the semi-continuity of the dimension of fibres is useful in order to figure out what happens at points in $B\m B_{0}$, see also \cite[page 66]{Na}: 

\begin{lemma}\cite[Satz 16]{Re0}
Let $F:X\to Y$ be a holomorphic map between complex spaces. Then every $a\in X$ has 
a neighbourhood $U\subset X$ such that 
\[ \dim_x F^{-1}(F(x))\leq \dim_a F^{-1}(F(a))\]
  for any $x\in U$.
  \fin
\end{lemma}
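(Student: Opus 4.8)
The plan is to establish the asserted upper semi-continuity of the fibre-dimension function $x\mapsto \dim_x F^{-1}(F(x))$ by a purely local argument at $a$, reducing everything to the finiteness of a single auxiliary map. Since the statement is local, I would fix a local embedding $(X,a)\subset(\bC^N,0)$ and set $r:=\dim_a F^{-1}(F(a))$. The goal is to produce a neighbourhood $U$ of $a$ on which $\dim_x F^{-1}(F(x))\leq r$ for every $x\in U$, which is exactly the claimed inequality.

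The main device is a generic linear projection. Applying the local parametrisation theorem (Noether normalisation) to the analytic germ $(F^{-1}(F(a)),a)$, which has dimension $r$, I would choose a linear projection $\pi:\bC^N\to\bC^r$ such that $a$ is an isolated point of $F^{-1}(F(a))\cap \pi^{-1}(\pi(a))$. Forming the map $G:=(F,\pi):X\to Y\times\bC^r$, the point $a$ is then isolated in the fibre $G^{-1}(G(a))$, so $G$ is quasi-finite at $a$; after shrinking $X$ to a suitable neighbourhood $U$ of $a$, the restriction $G|_U$ becomes finite, i.e. proper with finite fibres.

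The conclusion would then follow by transporting finiteness to the nearby fibres of $F$. For any $x\in U$, the fibre $G^{-1}(G(x))\cap U=F^{-1}(F(x))\cap \pi^{-1}(\pi(x))\cap U$ is finite, which says precisely that $x$ is an isolated point of $F^{-1}(F(x))\cap \pi^{-1}(\pi(x))$. Hence the restriction $\pi|_{F^{-1}(F(x))}$ has a $0$-dimensional fibre at $x$ while mapping into $\bC^r$, and the general dimension inequality $\dim_x \phi^{-1}(\phi(x))\geq \dim_x Z-\dim_{\phi(x)}W$ (applied to $Z=F^{-1}(F(x))$, $W=\bC^r$, $\phi=\pi$) forces $\dim_x F^{-1}(F(x))\leq r$. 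This is the desired bound on all of $U$.

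The step I expect to be the main obstacle is the passage from the single point $a$ to a whole neighbourhood: the genericity of $\pi$ is only guaranteed along the fibre through $a$, and the argument hinges on the fact that quasi-finiteness at $a$ propagates to nearby points. This is exactly what the finiteness (properness together with finite fibres) of $G|_U$ supplies, through the openness of the quasi-finite locus. An alternative, heavier route would be to prove directly that each set $\{x\mid \dim_x F^{-1}(F(x))\geq k\}$ is analytic, hence closed, by means of Remmert's rank theorem and induction on dimension; but the projection argument above is more self-contained and yields the inequality directly.
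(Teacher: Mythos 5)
The paper does not prove this lemma at all: it is imported as a citation to Narasimhan \cite[page 66]{Na} and used as a black box, so there is no proof in the paper to compare against. Your argument is correct and is essentially the standard textbook proof of upper semi-continuity of fibre dimension: with $r=\dim_a F^{-1}(F(a))$, a generic linear projection $\pi:\bC^N\to\bC^r$ (local parametrisation theorem applied to the germ $(F^{-1}(F(a)),a)$) makes $a$ isolated in $G^{-1}(G(a))$ for $G=(F,\pi)$; the local finiteness theorem then gives a neighbourhood $U$ on which $G$ is finite, so every $x\in U$ is isolated in $F^{-1}(F(x))\cap\pi^{-1}(\pi(x))$, and the active-lemma inequality $\dim_x\bigl(Z\cap\pi^{-1}(\pi(x))\bigr)\geq\dim_x Z-r$ applied to $Z=F^{-1}(F(x))$ yields $\dim_x F^{-1}(F(x))\leq r$. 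The one point on which the soundness of the argument really rests, and which you correctly identify, is that the propagation from quasi-finiteness at $a$ to finiteness on a neighbourhood comes from the local finiteness theorem (proved via properness of $G$ on a small ball plus the fact that compact analytic sets are finite), not from any semi-continuity statement -- otherwise the argument would be circular. As written, your proof is complete modulo these standard facts and matches what the cited reference establishes.
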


Since $B_{0}$ is dense in $B$,  the above lemma tells that in our case, for any $x\in B$ one has at least the inequality $\dim_x(f,g)^{-1}(f(x),g(x))\geq n -1$.  However, this inequality cannot be strict (i.e. not even at points in $B\m B_{0}$) since the converse inequality  $\dim_x(f,g)^{-1}(f(x),g(x))\leq n -1$ necessarily holds, as we have shown in the first part of the above  proof. 

And now,  since  $\dim_x(f,g)^{-1}(f(x),g(x))=n -1$ for all $x\in B$,  
our claim follows from Proposition \ref{p:nara} applied to the point $0\in B$.

This ends the proof of Theorem \ref{t:main1}(ii).
\fin

\subsection{Proof of Theorem \ref{t:main1}(i)(b)}\ 

If the image $\im(f,g)$ is a set germ then by Proposition \ref{p:germ} it is either a curve germ or $(\bC^2, 0)$. Let us show that it cannot be a curve. Indeed, if it is a curve $(C,0)$ different from the axes, then this has a Puiseux parametrisation, say  $(h(t), t^{\gamma})$, for some holomorphic function $h$ with $\ord_{0} h>0$, and some positive integer $\gamma$.  Then $g(x) =0$ implies that $t=0$, thus $f(x)=0$, which means $Z(g)\subset Z(f)$, and this contradicts the hypothesis. 
 If the curve $(C,0)$ is one of the axes,  then we immediately get the same contradiction (for instance, if the axis is $\bC \times \{0\}$, then this implies $Z(f)\subset Z(g)$). 
\fin 


\subsection{Examples where the image of $(f,g)$ is not a set germ}\label{ex:notnice}\label{ex:notnice2}\ 

\noindent
 Let  $G : \bC^{2} \to \bC^{2}$, $G(x, y) =  (x, xy)$. The global image of this algebraic map is the semi-algebraic set $(\bC^{2}\m \{ x=0\})\cup\{(0,0)\}$, 
  but since $\jac\hspace{1pt} G =x$ (thus not identically 0), Theorem \ref{t:main1}(ii) tells that the image of the map germ $G : (\bC^{2}, 0) \to (\bC^{2},0)$ is not well defined as a set germ.
    Actually, for  the 2-disks $D_t := \{|x|<t, |y|< t\}$ as basis of open neighbourhoods of 0 for $t>0$,
the image $A_t := G(D_t)$ is the open angle of vertex 0, having the horizontal axis as bisector,  and of slope $t$.  

\smallskip
\noindent
Let
 $F:\bC^2\to \bC^2$, $F(x,y)=(x(x+y), xy)$.  The global image of this algebraic map is $(\bC^{2}\m \Delta)\cup\{(0,0)\}$, where $\Delta$ denotes the diagonal line. 
 
Let us see that the image of the map germ $F$ is not a set germ  at $(0,0)$. 
The images of the line segments $x + (1-\alpha ) y=0$ inside a small ball $B_{\e}$ at the origin 
 are line segments centred at 0 in the target. When $\alpha$ tends to 1, the image segments tend to the diagonal $\Delta$, and their lengths tend to 0.  It follows that the images by $F$ of small balls intersected with arbitrarily small balls $D_{\delta}$ in the target are different, namely $F(B_{\e_{1}}) \cap D_{\delta} \not= F(B_{\e_{2}}) \cap D_{\delta}$.
  
 The property ``the image of $G$ is well defined as a set germ'' being invariant under change of coordinates in the source or in the target,  we consider the linear change of coordinates $(a, b) \mapsto (a-b, b)$ in the target. The resulting map germ
  $G :(\bC^2, 0)\to (\bC^2, 0)$, $G(x,y)=(x^{2}, xy)$  has one single \emph{gap line}  $\{ x=0\}$ (see \S \ref{ss:gap}),
and the image $\im G$ is not well defined as set germ, by Theorem \ref{t:main1}(ii), since $\jac F \not \equiv 0$.

\subsection{Examples where the image  of $(f,g)$ is a set germ}\label{ex:nice1}\label{ex:2var}\

\noindent
Let $F:(\bC^3, 0)\to (\bC^2, 0)$, $F(x,y,z) = (xy, xz)$. This satisfies the hypothesis of Theorem \ref{t:main1}(i)(b).
The fibers are not equidimensional, i.e. all fibres are curves except of the one over the origin which contains the plane $\{ x=0\}= \Sing F$. 
However, the image of any open ball $B_{\e} \s \bC^3$ centred at 0 contains the open ball $B_{\e^{2}} \s \bC^2$  centred at
0, thus the image of the map germ $F$ is a germ and $(\im F,0) =( \bC^2,0)$.

  Another example for Theorem \ref{t:main1}(i)(b) is  $F:(\bC^2, 0)\to (\bC^2, 0)$, $F(x,y)=(x(y+x^{2}),y(y+x^{2}))$. It is not trivial but still not difficult to show that $(\im F,0) =( \bC^2,0)$.  In \S \ref{ss:gap}  we give a sufficient criterion (Proposition \ref{p:crit}) for the image to be $(\bC^2, 0)$ which is easily verified by this example, cf  Example \ref{ex:im1}.


 
 \section{When is the image of a map germ locally open?}\label{s:Huckleb}
 
 We first give a sufficient test for the image of $(f,g)$ to be locally open which is handy (see Example \ref{ex:im1}), and show its limits.
Next we prove an ``if and only if''   criterium in full generality, that is for a holomorphic map germ $F:(X,a)\to (Y,b)$.

\subsection{Gap lines} \label{ss:gap}\

In order to test the ``locally open'' possibility in Theorem \ref{t:main1}(i)(b) 
we introduce the notion of ``gap line'', which loosely speaking means a line germ in  the target which does not contain points   of the image except of $0\in \bC^{2}$. 

\begin{definition}
 Suppose that  $\dim Z(f)\cap Z(g)=n-1$ and  $Z(f)\not\subset Z(g)$,  $Z(g)\not\subset Z(f)$.
 Let $f=h \hat f $, $g=h \hat g$,  where $h=\gcd(f,g) \in \mathfrak{m}_{n}$, up to invertible elements in $\mathcal{O}_n$, and such that $\hat f, \hat g \in \mathfrak{m}_{n}$.
  We say that $Z(\beta x + \alpha y) \subset \bC^{2}$  is a {\it gap line} for $(f,g)$
if   the analytic set germ   $(Z(\beta \hat f+\alpha \hat g), 0)$ is included in the fibre $(f,g)^{-1}(0,0)$.   
\end{definition}

\begin{remark}
  For a given map germ $(f,g)$ there are at most finitely many gap lines. Indeed, since $\hat f$ and $\hat g$ are co-prime, for a gap line we must  have the inclusion
 $Z(\beta \hat f+\alpha \hat g)\subset Z(h)$. As $Z(h)$ has finitely many irreducible components, our conclusion follows.
\end{remark}

\begin{proposition}\label{p:crit}
 Let  $f=h\hat f $, $g=h\hat g$,  where $h=\gcd(f,g) \in \frak{m}_{n}$, and such that $\hat f$ and $\hat g$ are not units.  
  If  $\dim Z(h)\cap Z(\beta \hat f+\alpha \hat g)=n-2$ for any  $\left[\alpha:\beta\right]\in\bP^1$, then $(\im(f,g), 0) = (\bC^{2}, 0)$.
\end{proposition}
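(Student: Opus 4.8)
The plan is to translate the hypothesis into the statement ``$(f,g)$ has no gap lines'', then, for each line through the origin in the target, use the Open Mapping Theorem on the corresponding hypersurface in the source to cover a neighbourhood of $0$ inside that line; the remaining — and main — difficulty is to make these one-dimensional neighbourhoods fit together into a genuine two-dimensional neighbourhood of $0$.

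First I would record the reformulation of the hypothesis. For $[\alpha:\beta]\in\bP^1$ put $V_{[\alpha:\beta]}:=Z(\beta\hat f+\alpha\hat g)$ and $\ell_{[\alpha:\beta]}:=\{\beta a+\alpha b=0\}\subset\bC^2$. Since $\hat f,\hat g$ are coprime non-units, $\beta\hat f+\alpha\hat g\not\equiv 0$, so $V_{[\alpha:\beta]}$ is a pure $(n-1)$-dimensional hypersurface germ. As $(f,g)^{-1}(0)=Z(h)\cup(Z(\hat f)\cap Z(\hat g))$ with $\dim(Z(\hat f)\cap Z(\hat g))=n-2$, the inclusion $V_{[\alpha:\beta]}\subset(f,g)^{-1}(0)$ is equivalent to $V_{[\alpha:\beta]}\subset Z(h)$, i.e.\ to $\dim(Z(h)\cap V_{[\alpha:\beta]})=n-1$. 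Hence the standing hypothesis $\dim(Z(h)\cap V_{[\alpha:\beta]})=n-2$ says precisely that $(f,g)$ is not identically $0$ on $V_{[\alpha:\beta]}$ for every direction — that is, there are no gap lines. A useful consequence: were $\im(f,g)$ contained in a curve, every line not contained in that curve would be a gap line, so the absence of gap lines forces $\jac(f,g)\not\equiv 0$, and in particular $0\in\overline{\i(\im(f,g))}$.

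Next, the per-line step. On $V_{[\alpha:\beta]}$ one has $\beta f+\alpha g=h(\beta\hat f+\alpha\hat g)=0$, so $(f,g)(V_{[\alpha:\beta]})\subset\ell_{[\alpha:\beta]}$. Choose an irreducible component $V'$ of $V_{[\alpha:\beta]}$ on which $(f,g)\not\equiv 0$ (it exists, by the no-gap-line reformulation). Composing $(f,g)|_{V'}$ with a linear coordinate identifying $\ell_{[\alpha:\beta]}\cong\bC$ yields a holomorphic function $\psi\colon V'\to\bC$ which is non-constant (else $\psi\equiv\psi(0)=0$, contradicting $(f,g)|_{V'}\not\equiv 0$). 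By the Open Mapping Theorem on the irreducible germ $V'$ (cf.\ \cite[page 107]{GR}), $\psi$ is open, so $\psi(V'\cap B_\e)$ is an open neighbourhood of $0$ in $\bC$; equivalently $(f,g)(V'\cap B_\e)\supset\{\,w\in\ell_{[\alpha:\beta]}:|w|<\delta_{[\alpha:\beta]}\,\}$ for some $\delta_{[\alpha:\beta]}>0$.

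Finally, the uniformity step, which I expect to be the crux. I must produce a single $\delta_0>0$ with $\delta_{[\alpha:\beta]}\geq\delta_0$ for all $[\alpha:\beta]\in\bP^1$; then $(f,g)(B_\e)\supset D_{\delta_0}$ and we are done. I would argue by contradiction and compactness of $\bP^1$: were there no such $\delta_0$, one could find points $w_k\to 0$ with $w_k\notin(f,g)(\overline{B_{\e'}})$ lying on directions $[\alpha_k:\beta_k]\to[\alpha_0:\beta_0]$; the limit direction is again not a gap line, so the segment of $\ell_{[\alpha_0:\beta_0]}$ of radius $\delta_{[\alpha_0:\beta_0]}>0$ is covered, and one needs this covering to be stable under the perturbation $[\alpha_k:\beta_k]\to[\alpha_0:\beta_0]$ in order to conclude $w_k\in(f,g)(B_{\e'})$ for large $k$, a contradiction. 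To make stability precise I would first pass to a general $2$-plane section $H\ni 0$: such a section preserves $\gcd$, keeps $\dim(Z(h)\cap V_{[\alpha:\beta]})=0$ for every $[\alpha:\beta]$ (so no gap line survives), and turns each $V_{[\alpha:\beta]}$ into a curve for which $(f,g)|_{V_{[\alpha:\beta]}}\colon V_{[\alpha:\beta]}\to\ell_{[\alpha:\beta]}$ is finite, its fibre over $0$ being $V_{[\alpha:\beta]}\cap(f,g)^{-1}(0)=\{0\}$. For such finite maps the covering radius cannot collapse as $[\alpha:\beta]$ varies, because no branch of the $0$-fibre escapes to the boundary — and this is exactly where the \emph{global} gap-line hypothesis (positivity of $\delta_{[\alpha:\beta]}$ at every direction, including the limit) is indispensable, as the example $G(x,y)=(x^2,xy)$ shows, where a single gap direction makes $\delta$ drop to $0$ and destroys local openness. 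Establishing this stability of the covering radius over the compact $\bP^1$ is the main obstacle of the proof.
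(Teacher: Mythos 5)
There is a genuine gap, and it starts with your opening reformulation. You claim that the hypothesis $\dim \bigl(Z(h)\cap Z(\beta \hat f+\alpha \hat g)\bigr)=n-2$ for all $[\alpha:\beta]\in\bP^1$ ``says precisely'' that there are no gap lines. It does not. A gap line requires the \emph{whole} germ $Z(\beta \hat f+\alpha \hat g)$ (all of its irreducible components) to lie in $(f,g)^{-1}(0)$, equivalently in $Z(h)$; the hypothesis of the proposition forbids even a \emph{single} component of $Z(\beta \hat f+\alpha \hat g)$ from lying in $Z(h)$. The condition $\dim\bigl(Z(h)\cap Z(\beta \hat f+\alpha \hat g)\bigr)=n-1$ can hold with no gap line present, when only some of the components sit inside $Z(h)$. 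The distinction is not pedantic: in the paper's Example \ref{ex:notequiv(b)}, $(f,g)=(xy,\,x^2y^2+y^3)$, one has $h=y$, $\hat g=y(x^2+y)$, there is no gap line (since $Z(\hat g)=Z(y)\cup Z(x^2+y)\not\subset Z(y)$), and yet the image is not locally open --- it misses the punctured parabola $\{v=u^2\}$. So ``no gap lines'' does \emph{not} imply $(\im(f,g),0)=(\bC^2,0)$, and any argument run purely from that reformulation, as your per-line and uniformity steps are, is attempting to prove a false statement.

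The same example also refutes the mechanism you propose for the uniformity step --- which you in any case leave unproven and acknowledge as ``the main obstacle of the proof.'' You argue that since the limit direction $[\alpha_0:\beta_0]$ is not a gap line, it has a positive covering radius $\delta_{[\alpha_0:\beta_0]}>0$, and that this should stabilize the coverings of nearby directions. In Example \ref{ex:notequiv(b)} \emph{every} direction has a strictly positive covering radius: for $[1:0]$, on the component $Z(x^2+y)$ of $Z(\hat g)$ the map is $(-x^3,0)$, which covers a full segment of the line $\{v=0\}$. Nevertheless $\delta$ collapses along directions of slope $u\to 0$, because the excluded points $(u,u^2)$, at distance $\sim|u|$ from the origin, lie on those lines. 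So positivity of $\delta$ at every direction, including at limits of directions, does not yield uniformity; what prevents the collapse is exactly the stronger dimension hypothesis (which fails in that example at $[1:0]$, where $Z(y)\subset Z(\hat g)$ gives $\dim Z(h)\cap Z(\hat g)=n-1$), and extracting uniformity from it requires a genuine argument you do not supply --- e.g.\ finiteness of $(f,g)$ on every component of every $Z(\beta\hat f+\alpha\hat g)$ after a generic planar slice, followed by a properness argument for the resulting family over the compact $\bP^1$. The paper avoids all of this: it first proves the general criterion (Proposition \ref{p:gapcurve}) that $(\im F,b)=(Y,b)$ if and only if $F$ has no gap \emph{curve}, via the Curve Selection Lemma applied to $Y\setminus F(B)$ and the Open Mapping Theorem on normalizations, and then shows algebraically that a gap curve $\{\varphi=0\}$ forces $Z(\varphi(f,g))\subset Z(h)$, whence, writing $P$ for the leading homogeneous form of $\varphi$, $\dim Z(h)\cap Z(P(\hat f,\hat g))=n-1$; factoring $P$ into linear forms then produces a direction violating the hypothesis. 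Note that this last step uses the full strength of the $n-2$ condition --- precisely the strength your reformulation discards.
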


The above result is not an equivalence, although it looks close to that, see Example \ref{ex:notequivgap}.  
The equality $(\im(f,g), 0) = (\bC^{2}, 0)$ implies that there are no gap lines (trivially), but the converse is not true, see Example \ref{ex:notequiv(b)}.

The proof of Proposition \ref{p:crit} will be given after the next \S \ref{ss:gapcurve} where we obtain a desired equivalence, and moreover in the most general setting.  But there is a price to pay.

\subsection{The gap variety, and a general  criterion for the existence of locally open image}\label{ss:gapcurve}

Let $F:(X,a)\to (Y,b)$,  $\dim X\geq \dim Y \ge 1$ be a  holomorphic map germ between two germs of reduced, locally irreducible complex spaces. 

\begin{definition}\label{d:gapvar}
Let  $(V,b)\subset (Y,b)$ be a complex analytic germ of positive dimension. We say that $(V,b)$ is a \emph{gap variety} for $F$ 
if  the inclusion of analytic set germs $(F^{-1}(V), a)\subset (F^{-1}(b), a)$ holds.
\end{definition}

 Let us remark that Huckleberry \cite{Hu} had used the same concept under the name  ``$F$ omits $V$''. 
Our general criterion is the following. 
 
\begin{proposition}\label{p:gapcurve}
  Let $F:(X,a)\to (Y,b)$,  $\dim X\geq \dim Y \ge 1$ be a  holomorphic map germ. Then
the image of $F$ is open at $b\in Y$ if and only if $F$ has no gap curve.
\end{proposition}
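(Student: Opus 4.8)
The plan is to prove the two implications separately: the direction ``gap curve $\Rightarrow$ not locally open'' is immediate, and the converse carries all the weight. For the easy direction, suppose $(C,b)$ is a gap curve, so $(F^{-1}(C),a)\subset (F^{-1}(b),a)$. Then for a small representative no point of a ball $B_\e\cap X$ is sent into $C\m\{b\}$, so $\im F$ meets $C$ only at $b$. Since $C$ carries points arbitrarily close to $b$ but distinct from it, no neighbourhood of $b$ in $Y$ can lie in $\im F$; hence $(\im F,b)\neq (Y,b)$. I will therefore spend the rest on the contrapositive of the other direction: assuming $F$ is \emph{not} locally open, I must produce a gap curve. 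Since $X$ is locally irreducible I may take $X$ irreducible, and, writing $\im F=F(\Reg X)\cup F(\Sing X)$, reduce by induction on $\dim X$ (as in Lemma \ref{l:union}) to working on the smooth locus, i.e. to $F$ a holomorphic map from a manifold.

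The first substantive step is to locate where the non-openness lives. Reusing the argument of the proof of Theorem \ref{t:main1}(a): if some $x$ near $a$ with $F(x)$ near $b$ satisfied $\dim_x F^{-1}(F(x))=\dim X-\dim Y$, then slicing $X$ by a generic $(\dim Y)$-plane through $x$ would give a finite map onto a neighbourhood in $Y$, whose image is open by the Open Mapping Theorem, so $F(x)\in \i(\im F)$. As $F$ is not open this forces $\dim_a F^{-1}(b)\ge \dim X-\dim Y+1$: the central fibre is strictly larger than the generic one, and in particular $b$ lies off the (open) image of the locus where $F$ has maximal rank $\dim Y$. Thus the gap must be sought along the over-large central fibre and the directions transverse to it.

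The second step produces a candidate curve. The image of a small ball is subanalytic, so $Y\m\im F$ is subanalytic, and non-openness gives $b\in\overline{Y\m\im F}$; by the Curve Selection Lemma there is a real-analytic arc $\gamma(t)$ with $\gamma(0)=b$ and $\gamma(t)\notin\im F$ for $t>0$. The crux is to promote this \emph{real} arc to a \emph{complex} gap curve. I intend to read the limiting tangent direction of $\gamma$ at $b$ as a point of the exceptional $\bP^{\dim Y-1}$ of the blow-up of $b$ in $Y$ that is missed by the blown-up image (this is exactly the ``blow-up and single gap line'' phenomenon recorded for $G(x,y)=(x^2,xy)$ in \S\ref{ex:notnice2}); following such a missed direction, and blowing up further along $\gamma$ when the contact with $\im F$ is of higher order, should yield an irreducible complex curve germ $(C,b)$ whose strict transform avoids the image. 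Holomorphy of $F$ is what makes a whole complex curve, rather than just the arc, stay in the complement: along the central fibre $\im F$ has the shape of a ``horn'' whose leading behaviour is governed by the normal derivative of $F$ on $F^{-1}(b)$, so a complementary complex direction persists to a genuine complex curve. Once $(C,b)$ is in hand, $C\cap\im F=\{b\}$ gives $(F^{-1}(C),a)\subset (F^{-1}(b),a)$, i.e. $C$ is a gap curve.

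The main obstacle is precisely this passage from a real arc to a complex curve. The Curve Selection Lemma is purely real-(sub)analytic, and a naive complexification of $\gamma$ may re-enter $\im F$; overcoming this requires exploiting the complex structure of $\im F$ near $b$ (through the blow-up model together with a \L ojasiewicz-type estimate for the horn) in order to select the missing direction \emph{holomorphically} and to certify that the resulting complex curve does not meet the image away from $b$. Everything else -- the easy implication, the reduction to the smooth irreducible case, and the fibre-dimension/slicing dichotomy -- is routine, and it is this holomorphic persistence of the gap direction that I expect to be the delicate point.
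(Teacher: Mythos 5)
Your easy direction is fine, and your use of the Curve Selection Lemma to produce a real-analytic arc $\gamma$ with $\gamma(0)=b$ and $\gamma(]0,\e[)\subset Y\setminus F(B)$ is exactly how the paper starts the hard direction. But the proposal stops being a proof at precisely the point you yourself flag as the crux: you never actually construct the gap curve. The blow-up/``horn''/\L ojasiewicz scheme for selecting a missed tangent direction and certifying that an iterated strict transform avoids the image is left entirely programmatic (``should yield'', ``I expect this to be the delicate point''), and it aims at a stronger intermediate statement than is needed, namely the explicit construction of a complex curve whose whole trace misses $\im F$. That is the hard-looking problem, and it is not solved here. (Two side remarks: your preliminary fibre-dimension dichotomy is not needed anywhere, and the reduction to smooth $X$ via $\im F=F(\Reg X)\cup F(\Sing X)$ is itself shaky, since a gap curve for $F_{|\Reg X}$ need not be a gap curve for $F$ --- the preimage condition must hold on all of $X$.)

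The paper's resolution shows that your ``main obstacle'' is spurious: the naive complexification already works. Let $C$ be the unique irreducible complex curve germ containing $\gamma$. To show $C$ is a gap curve one must show $(F^{-1}(C),a)\subset (F^{-1}(b),a)$, and this is done by contradiction: if some irreducible component $(A,a)$ of $(F^{-1}(C),a)$ is not contained in $(F^{-1}(b),a)$, then $F_{|A}:(A,a)\to(C,b)$ is a non-constant holomorphic map germ onto a curve germ; slicing $A$ by generic hyperplanes down to dimension one and passing to normalisations, the Riemann Extension Theorem plus the Open Mapping Theorem show that $F_{|A}$ is locally open. Hence $F(A\cap B)$ contains a full neighbourhood of $b$ inside $C$, in particular it contains $\gamma(t)$ for small $t>0$, contradicting $\gamma(]0,\e[)\subset Y\setminus F(B)$. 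In other words, the worry that ``a naive complexification of $\gamma$ may re-enter $\im F$'' is exactly what this argument excludes: re-entry at points accumulating at $b$ would force a component of $F^{-1}(C)$ mapping openly onto $(C,b)$, whose image would swallow the arc. No blow-ups, no analysis of tangent directions, and no \L ojasiewicz estimates are required --- the missing idea in your proposal is to verify the gap-curve condition on the \emph{source} side, through openness of non-constant maps onto curves, rather than on the target side by trying to show $C\cap\im F=\{b\}$ directly.
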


 Before giving the proof, let us remind a useful tool in the realm of subanalytic sets.
 
\begin{note}\label{n:note} 
The Curve Selection Lemma was first proved for semi-analytic sets, see e.g.  \cite{BC}, \cite{Mi}. We shall need it in the \emph{subanalytic} setting. The subanalytic concept has been introduced because the image by a real analytic map of a semi-analytic set is not semi-analytic in general. A subset $X$ of a real analytic manifold $M$ is called \emph{subanalytic} if for each point $x\in M$ there exists a neighbourhood $U\subset X$ of $x$, a real analytic manifold $N$, and a relatively
compact semi-analytic subset $A$ of $M\times N$ such that $U$ is the image of $A$ by the projection $M\times N\to M$. 
This category is closed under taking closures,  interiors,  complements, finite intersections, finite unions. Moreover, the image of a relatively compact subanalytic set by an  analytic map is subanalytic. For more details, see e.g. \cite{Ga}, \cite{Hi}, \cite{BM}, \cite{Loj3}.
 
  In the setting of subanalytic sets, the Curve Selection Lemma is due to  Hironaka \cite[Prop. 3.9, pag. 482]{Hi}:  \emph{Let $X\subset M$ be a subanalytic set and let $b\in \partial X$. There exists a real analytic function $\gamma: ]-\e,\e[ \to M$  such that $\gamma(0)=b$ and that $\gamma (]0,\e [)\subset X$. }
\end{note}

\subsection{Proof of Proposition \ref{p:gapcurve}.}
 ``$\Rightarrow$'' is obvious, so let us show ``$\Leftarrow$''. 
  Let us also recall from the Introduction that, by the very definitions,  if $F$ is locally open at $b\in Y$, then its image is well-defined as a set germ.
  
 If the image of $F$ is not open at $b\in Y$, then there exists some open ball $B$ 
centred at $a$ such that $b\in\partial(Y\setminus F(B))$, where $Y$ denotes here some representative of the analytic set germ $(Y,b)$.  Since $Y\setminus F(B)$ is a subanalytic set (as being the complement of the image of a relatively compact analytic set, see Note \ref{n:note} and its references), the above cited Curve Selection Lemma says that there exists a real 
analytic function $\gamma: ]-\e,\e[ \to Y$  such that $\gamma(0)=b$ and that $\gamma (]0,\e [)\subset Y\setminus F(B)$. We consider the complexification of  $\gamma(]-\e,\e[)$, i.e. the (unique) irreducible complex analytic
curve $C$ containing $\gamma(]-\e,\e[)$, and note that $C\subset Y$. The analytic set germ  $(F^{-1}(C), a)$ has finitely many  irreducible components;  let $(A,a)$ be one of the components such that $(A,a)\not\subset (F^{-1}(b), 0)$. Then the restriction $F_{|A}: (A,a) \to (C, b)$ is locally open, and this solves our problem.

For the reader's convenience, let us develop the argument why  $F_{|A}$ is locally open: by repeatedly slicing with generic hyperplanes, one may reduce to $\dim (A,a)=1$, and the generic slicing preserves the condition $F_{|A}$ is not the constant map. By the Riemann Extension Theorem $F_{|A}$ lifts to a holomorphic map between the normalisations $\tilde F_{|A} : (\tilde A,a) \to (\tilde C, \tilde b)$ which is open due to the Open Mapping Theorem.

\medskip

\subsection{Proof of Proposition \ref{p:crit}.}
  Let us suppose by contradiction that  the image of $(f,g)$ is not open at $(0,0) \in\bC^2$, or does not exist as a set germ.  
Applying Proposition \ref {p:gapcurve} to our setting, it follows that  $(f,g)$ has a gap curve germ $(C, 0)\subset(\bC^2,0)$. 
 If $\{\varphi=0\}$ is a local equation 
for $C$,  then  by Definition \ref{d:gapvar} we have the inclusion $Z(\varphi(f,g))\subset Z(f)\cap Z(g)=Z(h)\cup(Z(\hat f)\cap Z(\hat g))$.  Since $Z(\varphi(f,g))$ has pure dimension $n-1$ and since 
$\dim Z(\hat f)\cap Z(\hat g)=n-2$,  we deduce the inclusion $Z(\varphi(f,g))\subset Z(h)$.   
 In local coordinates we may write 
 $\varphi(x,y)=P(x,y)+\sum_{i+j\geq p+1}c_{i,j}x^iy^j$, where $P(x,y)$ is a homogeneous polynomial
of degree $p\geq 1$. 
Therefore  $\varphi(f,g))=P(\hat f,\hat g)h^p+h^{p+1} \tilde h$, for some holomorphic function $\tilde h$.
Then $Z(\varphi(f,g))\subset Z(h)$ implies that $Z(P(\hat f,\hat g)+h\tilde h)\subset Z(h)$. We then deduce
$Z(P(\hat f,\hat g)+h\tilde h)\subset Z(h)\cap Z(P(\hat f,\hat g))$, and in particular 
$\dim Z(h)\cap Z(P(\hat f,\hat g))=n-1$. 

Writing now the homogeneous polynomial $P(x,y)$ as a product of linear factors, we deduce that there exists
$\left[\alpha:\beta\right]\in\bP^1$ such that $\dim Z(h)\cap Z(\beta \hat f+\alpha \hat g)=n-1$.
This ends the proof of Proposition \ref{p:crit}.

\bigskip

The condition ``no gap curve'' is nice in theory but
not easily checkable in practice, thus proving the locally openness of $F$ usually amounts to direct computations. See Example \ref{ex:notequivgap} below,  preceded by a couple of other examples for the above described situations.

\begin{example}\label{ex:im1}
Let $F:(\bC^2, 0)\to (\bC^2, 0)$, $F(x,y)=(x(y+x^{2}),y(y+x^{2}))$, which we have already seen before (\S\ref{ex:2var}). 
It is trivial to check the criterion of Proposition \ref{p:crit}, thus we have $(\im F,0) =( \bC^2,0)$.  

Let us also remark that by changing coordinates locally  $x=u,  y=v-u^2$ one gets the map germ $(uv, v(v-u^2))$
which is an example that Huckleberry computed explicitely  in \cite[p. 449]{Hu}.
\end{example}

\begin{example}\label{ex:notequiv(b)}
Let $(f,g):\bC^2\to\bC^2$, $(f,g)(x,y)=(xy,x^2y^2+y^3)$. \\
Then $h(x,y)=y$, $\hat f(x,y)=x$, $\hat g(x,y)=x^2y+y^2$.\\
If $\left[\alpha:\beta\right]\neq[1:0]$ then $Z(\beta \hat f+\alpha \hat g)\cap Z(h)=\{(0,0)\}$ and hence 
$Z(\beta \hat f+\alpha \hat g)\not\subset Z(h)$.\\
If $\left[\alpha:\beta\right] =[1:0]$ then $Z(\beta \hat f+\alpha \hat g)=Z(y(x^2+y))\not\subset Z(y)$.

We deduce that  $(f,g)$ has no gap line. Nevertheless $(f,g)$ has a gap curve since $\im f\cap\{(u,v)\in\bC^2 \mid v=u^2\}=(0,0)$. By Proposition \ref{p:gapcurve} together with Theorem \ref{t:main1}(i)(b), it then follows that the image of $(f,g)$ is not a well defined set germ.
\end{example}

\begin{example}\label{ex:notequivgap}
Let $(f,g):\bC^2\to\bC^2$, $(f,g)(x,y)=(x(x^4+y),y(x^4+y)^2)$.  Note that $h(x,y)=x^4+y$, $\hat f(x,y)=x$, $\hat g(x,y)=y(x^4+y)$
and hence $\dim Z(h)\cap Z(\hat g)=n-1$.
We claim that $(\im (f,g),0)=(\bC^2,0)$. 

Let $D_r\subset \bC$ denote the closed disk centred at the origin and of radius $r$.
We will show that for any $\frac  12 > \e>0$, there is  $r>0$ such that   $f(D_\e\times D_\e)\supset D_r\times D_r$. 
We shall actually prove this inclusion in the following by assuming that $r<\e^{10}$.

We need to show that for any $(a,b)\in D_r\times D_r$ there exists $(x_0,y_0)\in D_\e\times D_\e$
such that $(f,g)(x_0,y_0)=(a,b)$. The proof falls into two cases.
\medskip

\noindent
Case 1. $|a|^2\ge |b|$. Let $k:=\frac b{a^2}$, thus $|k|\le 1$. We consider the equation $x(x^4+k^2x^2)=a$. This  has five complex solutions and their product is $a$. It follows that for 
at least one of them, say $x_0$, we have $|x_0|\le |a|^{1/5}<r^{1/5}<\e$. For $y_0:=kx_0^2$ we get
$(x_0,y_0)\in D_\e\times D_\e$ and $f(x_0,y_0)=(a,b)$.
\medskip

\noindent
Case 2. $|a|^2\le |b|$. Let $k' :=\frac {a^2}b$, thus $|k'|\le 1$. 
We consider the equation $y((k')^2y^2+y)^2=b$ and  claim that this equation has at least three solutions in $D_{\e^2}\subset D_\e$. 
Since the  three solutions of the equation $y^3-b=0$ 
are in $D_{\e^2}$, by Rouche's Theorem it suffices to show that $|(k')^4y^5+2(k')^2y^4|<|y^3-b|$ on $\partial D_{\e^2}$. 
However, if $|y|=\e^2$, we have
$|y^3-b|\ge \e^6-r > \e^6-\e^{10}\ge \frac{15}{16}\e^6$ (since $\e<1/2$). On the other hand
$|(k')^4y^5+2(k')^2y^4|<\e^{10}+2\e^8<(\frac 1{16} +\frac 12)\e^6<\frac {15}{16}\e^6$, and therefore we have 
$|(k')^4y^5+2(k')^2y^4|<|y^3-b|$ on $\partial D_{\e^2}$ indeed.

We then use such a solution $y_0$. For  $x_0 :=\frac{a}{(k')^2y_0^2+y_0}$  we get
$x_0^2=\frac{a^2}{b}y_0=k'y_0$ and therefore $|x_0|^2<\e^2$, thus $|x_0|<\e$. We also have 
$x_0(x_0^4+y_0)=a$ and $y_0(x_0^4+y_0)^2=b$, thus  $(f,g)(x_0,y_0)=(a,b)$.
 
\end{example}

\subsection{Locally open image and the ``subflat'' condition}\label{p:ndim}


Huckleberry conjectured in \cite[p. 461]{Hu} that if $F$ is not totally singular (i.e. $\Sing F = X$ as germs at $a$), then
$(\im F,a)=(Y,b)$  if and only if  $F$ is  \emph{subflat}. 

\begin{definition}\cite{Hu}\label{d:subflat}
If $X$ and $Y$ are reduced, locally irreducible complex spaces and $F:X\to Y$ is a holomorphic mapping then $F$ is called \emph{subflat}
at $p\in X$ if  for every prime ideal $I\subset \cO_{Y,F(p)}$ such that $\dim V(I)>0$ we have that $\langle F^*(I)\rangle\cap F^*(\cO_{Y,F(p)})=F^*(I)$, where $\langle F^*(I)\rangle$ denotes the ideal generated by $F^*(I)$.
\end{definition}

Huckleberry proves his conjecture in case of holomorphic maps  $(\bC^{2},0) \to (\bC^{2},0)$.
We are now in position to derive a proof in the general setting:

\begin{theorem}\label{t:huck2} 
Let $F:(X,a)\to (Y,b)$,  $\dim X\geq \dim Y \ge 1$, be a  holomorphic map germ\footnote{Again,  this only makes sense for $F$ which are not totally singular, i.e. $\Sing F \not= X$.}
between two germs of reduced, locally irreducible complex spaces. Then  $(\im F,b)=(Y,b)$ if and only if $F$ is subflat. 
\end{theorem}
\begin{proof}
 Huckleberry \cite[Prop. 3.2]{Hu} had  actually proved in the same setting  the following statement, by using essentially the Nullstellensatz: \\
 
 \noindent
 (*) \emph{a  holomorphic $F$ such that $\Sing F \not= X$  is  subflat if and only if $F$ has no gap curve}.  \\
 
 The notion of ``gap curve '' has been given in Definition \ref{d:gapvar}. Therefore the proof of Theorem \ref{t:huck2} reduces, via (*),  to the equivalence ``\emph{$(\im F,b)=(Y,b)$ iff  $F$ has no gap curve}''  which is precisely our Proposition \ref{p:gapcurve}.
\end{proof}


\begin{remark}
Given two germs of complex spaces $(X,a)$ and $(Y,b)$, it might happen that there is no holomorphic map germ $F:(X,a)\to (Y,b)$ with $(\im F,b)=(Y,b)$.
 In \cite{CJ} one obtains a characterisation of all two-dimensional  complex germs $(Y,b)$ for which there exists a holomorphic map $F:(\bC^n,0)\to (Y,b)$ such that $(\im F,b)=(Y,b)$.
\end{remark}






\vspace{\fill}
\end{document}